\documentclass[11pt]{amsart}

\usepackage[T1]{fontenc}
\usepackage[a4paper,margin=1in]{geometry}
\usepackage{amsmath,amssymb,mathtools}
\usepackage{enumitem}
\usepackage{microtype}
\usepackage[colorlinks=true, linkcolor=blue,urlcolor=blue]{hyperref}
\hypersetup{
  pdftitle={Factorization Bounds and Irreducibility Criteria in Hurwitz Series Rings},
  pdfauthor={Morteza Ahmadi},
  pdfkeywords={Hurwitz series ring, irreducibility, factorization length, discrete valuation domain, Newton polygon}
}

\allowdisplaybreaks

\newtheorem{theorem}{Theorem}[section]
\newtheorem{lemma}[theorem]{Lemma}
\newtheorem{proposition}[theorem]{Proposition}
\newtheorem{corollary}[theorem]{Corollary}
\theoremstyle{definition}
\newtheorem{definition}[theorem]{Definition}
\newtheorem{example}[theorem]{Example}
\theoremstyle{remark}
\newtheorem{remark}[theorem]{Remark}

\newcommand{\N}{\mathbb N}
\newcommand{\Z}{\mathbb Z}
\newcommand{\Q}{\mathbb Q}
\newcommand{\supp}{\operatorname{supp}}
\newcommand{\lengthset}{\mathsf L_H}
\newcommand{\NP}{\operatorname{NP}_H}
\newcommand{\U}{\mathrm U}

\title[Factorization in Hurwitz Series Rings]{Factorization Bounds and Irreducibility Criteria in Hurwitz Series Rings}
\author{Morteza Ahmadi}
\address{Department of Pure Mathematics, Faculty of Mathematical Sciences\\
	Tarbiat Modares University, P.O.Box:14115-134, Tehran, Iran}
\email{morteza.ahmadi23@gmail.com\\ mortezy.ahmadi@modares.ac.ir}
\keywords{Hurwitz series ring, Hurwitz polynomial ring, irreducibility, factorization length, discrete valuation domain, Newton polygon}
\date{}

\begin{document}

\begin{abstract}
Let $R$ be a principal ideal domain and let $HR$ denote the Hurwitz series ring over $R$.  We first give a recursive splitting formula for a Hurwitz series whose constant coefficient is a product of two coprime nonunits.  This construction leads to irreducibility tests for prime-power constant coefficients, decompositions indexed by the distinct prime divisors of the constant coefficient, and upper and lower bounds for the length of every irreducible factorization.  Over a discrete valuation domain, the valuation of any selected coefficient yields a sharper length bound.  We then introduce the Hurwitz--Newton polygon and prove a product rule on ranges in which the relevant binomial coefficients are units.  Primitive one-edge polygons consequently provide Dumas-type irreducibility criteria.  Finally, localization is used to combine independent criteria at several prime elements.  Examples over $\Z$, localized polynomial rings, and the Gaussian integers illustrate the results.
\end{abstract}

\maketitle

\section{Introduction}

Hurwitz series rings arise naturally in differential algebra and provide a multiplication law governed by binomial convolution \cite{Keigher1997}.  Their ring-theoretic properties have been studied from several complementary viewpoints.  In particular, skew Hurwitz serieswise Armendariz rings were investigated in \cite{AhmadiMoussaviNourozi2014}, the behavior of nilradicals was analyzed in \cite{AhmadiMoussaviNourozi2015}, nilpotent elements of skew Hurwitz polynomial rings were considered in \cite{NouroziMoussaviAhmadi2017}, singular ideals were characterized in \cite{Ahmadi2019}, and McCoy-type properties were developed in \cite{NouroziRahmatiAhmadi2021}.  The notation used below follows the Hurwitz-series conventions of \cite{Ahmadi2019}.

This paper studies factorization directly inside the commutative Hurwitz series ring.  The first group of results uses only the constant coefficient and the binomial convolution law.  A coprime factorization of the constant coefficient can be lifted recursively to a factorization of the whole series.  In the opposite direction, a prime-power constant coefficient, together with a suitable higher coefficient, restricts or prevents nontrivial factorizations.  These arguments give quantitative bounds for all irreducible factorization lengths without requiring the Hurwitz series ring itself to be a unique factorization domain.

A second group of results uses valuations.  The binomial coefficients in the Hurwitz product can affect a valuation, so the Newton-polygon arguments are stated on a range where all relevant factorials are units.  On such a range, a product rule for Hurwitz--Newton polygons yields primitive-edge irreducibility tests.  Local criteria at distinct prime elements can then be assembled by a coprime splitting and localization argument.

The paper is organized as follows.  Section~\ref{sec:preliminaries} fixes the notation and records basic coefficient formulas.  Section~\ref{sec:constant} develops constant-coefficient splitting, irreducibility criteria, and factorization-length bounds.  Section~\ref{sec:newton} establishes the Hurwitz--Newton polygon criteria.  Section~\ref{sec:localization} treats localization and several prime divisors. %, and Section~\ref{sec:examples} gives applications.

\section{Hurwitz series notation and preliminary facts}\label{sec:preliminaries}

Throughout, $\N=\{0,1,2,\ldots\}$.  Let $R$ be a ring with identity and let $\alpha\colon R\to R$ be an endomorphism.  Following \cite{Ahmadi2019}, the skew Hurwitz series ring $(H(R),\alpha)$, also denoted by $(HR,\alpha)$, consists of all functions $f\colon\N\to R$.  Addition is componentwise, and multiplication is defined by
\begin{equation}\label{eq:skew-product}
 (fg)(n)=\sum_{k=0}^{n}\binom{n}{k}f(k)\alpha^k\bigl(g(n-k)\bigr),
 \qquad n\in\N.
\end{equation}
For $f\in(HR,\alpha)$, set
\[
 \supp(f)=\{i\in\N:f(i)\ne0\}.
\]
When $f\ne0$, let $\Pi(f)$ be the least element of $\supp(f)$, and let $\Delta(f)$ be its greatest element whenever that element exists.

For $n\ge1$, define $h_n\colon\N\to R$ by
\[
 h_n(n-1)=1,
 \qquad
 h_n(m)=0\quad(m\ne n-1).
\]
For $r\in R$, let $h'_r\colon\N\to R$ satisfy $h'_r(0)=r$ and $h'_r(n)=0$ for $n\ne0$.  The identity of $(HR,\alpha)$ is $h_1$, and the subring
\[
 R'=\{h'_r:r\in R\}
\]
is naturally isomorphic to $R$.  The skew Hurwitz polynomial ring $(hR,\alpha)$ is the subring formed by the elements having finite support, equivalently those nonzero $f$ for which $\Delta(f)<\infty$.

The factorization results in this paper concern a commutative ring $R$ and the identity endomorphism.  We therefore write $HR$ and $hR$ for $(HR,\operatorname{id}_R)$ and $(hR,\operatorname{id}_R)$, respectively.  If $a_i=f(i)$, then
\[
 f=\sum_{i=0}^{\infty}h'_{a_i}h_{i+1},
\]
and \eqref{eq:skew-product} becomes
\begin{equation}\label{eq:hurwitz-product}
 (fg)(n)=\sum_{k=0}^{n}\binom{n}{k}f(k)g(n-k).
\end{equation}
In particular,
\begin{equation}\label{eq:basis-product}
 h_{i+1}h_{j+1}=\binom{i+j}{i}h_{i+j+1}
 \qquad(i,j\in\N).
\end{equation}

\begin{lemma}[Unit criterion]\label{lem:unit}
Let $R$ be a commutative ring.  Then $f\in HR$ is invertible if and only if $f(0)\in\U(R)$.
\end{lemma}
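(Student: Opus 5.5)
The plan is to prove both directions directly from the convolution formula \eqref{eq:hurwitz-product}, constructing the inverse coefficient by coefficient. For necessity, suppose $fg=h_1$ for some $g\in HR$. Evaluating at $n=0$ gives $(fg)(0)=\binom00 f(0)g(0)=f(0)g(0)$. Since $h_1(0)=1$, we get $f(0)g(0)=1$, so $f(0)\in\U(R)$. Commutativity of $HR$, which holds because $R$ is commutative and $\binom nk=\binom n{n-k}$, means no separate left/right inverse argument is needed.

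For sufficiency, assume $u:=f(0)\in\U(R)$ and define $g\colon\N\to R$ recursively. Set $g(0)=u^{-1}$, and for $n\ge1$ set
\[
 g(n)=-u^{-1}\sum_{k=1}^{n}\binom{n}{k}f(k)\,g(n-k).
\]
This is well defined, because the right-hand side only involves $g(0),\dots,g(n-1)$. We then check against \eqref{eq:hurwitz-product}. First, $(fg)(0)=ug(0)=1$. For $n\ge1$, isolating the $k=0$ term gives
\[
 (fg)(n)=u\,g(n)+\sum_{k=1}^{n}\binom nk f(k)g(n-k)=0,
\]
by the choice of $g(n)$. Hence $fg=h_1$, since $h_1(0)=1$ and $h_1(m)=0$ for $m\ne0$. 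By commutativity, $gf=h_1$ as well, so $f$ is invertible.

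There is no serious obstacle. The only points to watch are making sure the identity of $HR$ is $h_1$, i.e.\ the function that is $1$ at $0$ and $0$ elsewhere, and noting that the recursion never requires dividing by binomial coefficients or factorials. Only the unit $u$ is inverted, so the argument works over any commutative ring, including those of positive characteristic.
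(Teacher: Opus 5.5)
Your proof is correct and follows essentially the same route as the paper: evaluate $fg=h_1$ at $0$ for necessity, and for sufficiency build the inverse recursively via $g(0)=f(0)^{-1}$ and $g(n)=-f(0)^{-1}\sum_{k=1}^{n}\binom{n}{k}f(k)g(n-k)$. Your remark that commutativity of $HR$ makes $g$ a two-sided inverse is a small point the paper leaves implicit.
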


\begin{proof}
If $fg=h_1$, evaluation at $0$ gives $f(0)g(0)=1$.  Conversely, suppose that $f(0)$ is a unit.  Define $g(0)=f(0)^{-1}$ and, for $n\ge1$, set
\[
 g(n)=-f(0)^{-1}\sum_{k=1}^{n}\binom{n}{k}f(k)g(n-k).
\]
The coefficient formula \eqref{eq:hurwitz-product} then gives $(fg)(0)=1$ and $(fg)(n)=0$ for every $n\ge1$.  Hence $fg=h_1$.
\end{proof}

For later coefficient estimates, let $f_t\in HR$ for $1\le t\le s$.  Repeated application of \eqref{eq:hurwitz-product} gives
\begin{equation}\label{eq:multifactor}
 (f_1\cdots f_s)(n)
 =\sum_{i_1+\cdots+i_s=n}
 \binom{n}{i_1,\ldots,i_s}
 \prod_{t=1}^{s}f_t(i_t),
\end{equation}
where
\[
 \binom{n}{i_1,\ldots,i_s}=\frac{n!}{i_1!\cdots i_s!}.
\]

Let $R$ be a unique factorization domain.  For a nonzero nonunit $a\in R$, write $\omega(a)$ for the number of pairwise nonassociate prime divisors of $a$, and $\Omega(a)$ for the number of prime factors counted with multiplicity.  Since $HR$ need not have unique factorization, we use
\[
 \lengthset(f)=\{s\ge1~|~f=q_1\cdots q_s
 \text{ with every }q_i\text{ irreducible in }HR\}.
\]
All length estimates below are assertions about the elements of this set; they do not presuppose that all irreducible factorizations have the same length.

\section{Constant coefficients and factorization lengths}\label{sec:constant}

\begin{theorem}[Coprime lifting]\label{thm:coprime-lifting}
Let $R$ be a principal ideal domain and let $f\in HR$ have a nonzero nonunit constant coefficient.  Assume
\[
 f(0)=mn,
 \qquad (m,n)=R,
\]
where both $m$ and $n$ are nonunits.  Then $f$ is reducible in $HR$.
More precisely, if $u,v\in R$ satisfy $mu+nv=1$, there is $g\in HR$ with $g(0)=0$ such that
\begin{equation}\label{eq:coprime-factorization}
 f=(h'_m+h'_vg)(h'_n+h'_ug).
\end{equation}
\end{theorem}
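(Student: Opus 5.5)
The plan is to expand the right-hand side of \eqref{eq:coprime-factorization} formally and reduce the problem to solving a quadratic equation for $g$ in $HR$, which can then be solved coefficient by coefficient. By \eqref{eq:hurwitz-product}, $h'_r$ acts on any $k\in HR$ by scalar multiplication, $(h'_rk)(n)=rk(n)$. Also $r\mapsto h'_r$ is a ring embedding of $R$ onto the subring $R'$, and $HR$ is commutative. Hence
\[
 (h'_m+h'_vg)(h'_n+h'_ug)=h'_{mn}+h'_{mu+nv}\,g+h'_{uv}\,g^2 = h'_{mn}+g+h'_{uv}\,g^2,
\]
using $mu+nv=1$ and $h'_1=h_1$. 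Put $F=f-h'_{mn}$, so $F(0)=f(0)-mn=0$. It then suffices to find $g\in HR$ with $g(0)=0$ and
\[
 g+h'_{uv}\,g^2=F.
\]

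I will construct $g$ recursively. Set $g(0)=0$. If $g(0)=0$, then for every $N\ge1$ formula \eqref{eq:hurwitz-product} gives
\[
 (g^2)(N)=\sum_{k=1}^{N-1}\binom{N}{k}g(k)g(N-k),
\]
because the terms with $k=0$ and $k=N$ vanish. This sum involves only $g(1),\dots,g(N-1)$. So the equation at coefficient $N$,
\[
 g(N)+uv\,(g^2)(N)=F(N),
\]
can be taken as the definition
\[
 g(N)=F(N)-uv\sum_{k=1}^{N-1}\binom{N}{k}g(k)g(N-k).
\]
By induction on $N$, this defines $g\in HR$ with $g(0)=0$. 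The equation then holds at every coefficient: at $N=0$ both sides are $0$, and for $N\ge1$ it holds by construction. Therefore \eqref{eq:coprime-factorization} holds.

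For reducibility, evaluate each factor at $0$. Since $g(0)=0$, the constant coefficients are $m$ and $n$. Both are nonunits, so by Lemma~\ref{lem:unit} neither factor is invertible in $HR$. Also $f\neq0$ because $f(0)=mn\neq0$, so $f$ is a product of two nonunits and is reducible. The only real point is the triangularity of the recursion, namely that $g(0)=0$ keeps $g(N)$ out of $(g^2)(N)$. This needs no division, so no hypothesis on factorials is required. The PID hypothesis is used only to guarantee the Bézout elements $u,v$.
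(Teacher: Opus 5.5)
Your proposal is correct and is essentially the paper's proof. It uses the same triangular recursion $g(t)=f(t)-uv\sum_{i=1}^{t-1}\binom{t}{i}g(i)g(t-i)$ with $g(0)=0$, the same coefficient comparison via $mu+nv=1$, and the same appeal to Lemma~\ref{lem:unit} for the nonunit factors; recasting it as the quadratic equation $g+h'_{uv}g^2=F$ is only a cleaner organization of the identical computation.
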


\begin{proof}
Set $g(0)=0$.  Once $g(1),\ldots,g(t-1)$ have been chosen, define
\begin{equation}\label{eq:g-recursion}
 g(t)=f(t)-uv\sum_{i=1}^{t-1}\binom{t}{i}g(i)g(t-i)
 \qquad(t\ge1).
\end{equation}
The constant coefficient on the right-hand side of \eqref{eq:coprime-factorization} is $mn=f(0)$.  For $t\ge1$, its $t$-th coefficient equals
\[
 (mu+nv)g(t)+uv(g^2)(t)
 =g(t)+uv\sum_{i=1}^{t-1}\binom{t}{i}g(i)g(t-i),
\]
which is $f(t)$ by \eqref{eq:g-recursion}.  The two factors in \eqref{eq:coprime-factorization} have nonunit constant coefficients $m$ and $n$, so Lemma~\ref{lem:unit} shows that both are nonunits.
\end{proof}

\begin{theorem}[Prime-power test]\label{thm:prime-power-test}
Let $R$ be a principal ideal domain and let $f\in HR$.  Suppose that $f(0)$ is associate to $p^k$, where $p$ is prime and $k\ge1$.  If $k=1$, or if $p\nmid f(1)$, then $f$ is irreducible in $HR$.
\end{theorem}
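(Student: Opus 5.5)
Suppose $f=gh$ with $g,h\in HR$ both nonunits; we aim for a contradiction. Evaluating \eqref{eq:hurwitz-product} at $0$ gives $f(0)=g(0)h(0)$. Since $f(0)\neq0$ and $R$ is a domain, both $g(0)$ and $h(0)$ are nonzero. By Lemma~\ref{lem:unit}, neither $g(0)$ nor $h(0)$ is a unit, since otherwise the corresponding factor would be invertible. Because $R$ is a principal ideal domain, hence a unique factorization domain, and $g(0)h(0)$ is associate to $p^k$, uniqueness of factorization forces
\[
g(0)\sim p^{a},\qquad h(0)\sim p^{b},\qquad a+b=k,\quad a,b\ge1 .
\]

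The case $k=1$ is then immediate: $a+b=1$ with $a,b\ge1$ is impossible, so no such factorization exists and $f$ is irreducible. Note that $f$ itself is a nonunit by Lemma~\ref{lem:unit}, because $f(0)\sim p^k$ is a nonunit, and $f\neq0$. So $f$ is a nonzero nonunit admitting no factorization into two nonunits, which is what irreducibility requires.

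For the case $p\nmid f(1)$, evaluate \eqref{eq:hurwitz-product} at $n=1$:
\[
f(1)=g(0)h(1)+g(1)h(0).
\]
Both $g(0)$ and $h(0)$ are divisible by $p$, since $a,b\ge1$. Hence $p\mid f(1)$, a contradiction, so $f$ is irreducible in this case as well.

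The only point needing care is the first step: deducing that each constant coefficient is a nonunit power of $p$. This rests on the unit criterion and on unique factorization in $R$ applied to $g(0)h(0)\sim p^k$. We also need to confirm that $HR$ is a domain, or at least that the nonzero-ness used above holds. That is automatic here, since we only use $g(0)h(0)=f(0)\neq0$. With these in place, the rest is a direct coefficient computation.
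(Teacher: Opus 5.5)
Your proof is correct and follows essentially the same argument as the paper: you use $f(0)=g(0)h(0)$ together with Lemma~\ref{lem:unit} to force the constant coefficients of any nontrivial factors to be positive powers of $p$, which rules out $k=1$, and then you read off $p\mid f(1)$ from $f(1)=g(0)h(1)+g(1)h(0)$. You are slightly more explicit than the paper in invoking unique factorization to get $g(0)\sim p^a$, $h(0)\sim p^b$ with $a,b\ge1$, and in checking that $f$ itself is a nonzero nonunit.
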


\begin{proof}
If $k=1$ and $f=gh$, the equality $f(0)=g(0)h(0)$ forces one of $g(0)$ and $h(0)$ to be a unit.  The corresponding series is a unit by Lemma~\ref{lem:unit}.

Now assume $k>1$ and suppose that $f=gh$ with $g$ and $h$ both nonunits.  Their constant coefficients are then divisible by $p$.  Since
\[
 f(1)=g(0)h(1)+g(1)h(0),
\]
we obtain $p\mid f(1)$, contrary to the hypothesis.
\end{proof}

\begin{theorem}[Decomposition by distinct prime divisors]\label{thm:prime-decomposition}
Let $R$ be a principal ideal domain and let $f\in HR$ satisfy
\[
 f(0)=u\prod_{i=1}^{r}p_i^{k_i},
\]
where $u$ is a unit, the $p_i$ are pairwise nonassociate primes, and $k_i\ge1$.  If $(f(0),f(1))=R$, then
\[
 f=g_1\cdots g_r
\]
for irreducible series $g_i\in HR$ such that $g_i(0)$ is associate to $p_i^{k_i}$.
\end{theorem}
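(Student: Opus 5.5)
We argue by induction on $r$, splitting off one prime-power block at a time with Theorem~\ref{thm:coprime-lifting} and closing with Theorem~\ref{thm:prime-power-test}.

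\emph{Base case $r=1$.} Here $f(0)$ is associate to $p_1^{k_1}$. The hypothesis $(f(0),f(1))=R$ gives $p_1\nmid f(1)$. Theorem~\ref{thm:prime-power-test} then shows that $f$ is irreducible, so we take $g_1=f$.

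\emph{Inductive step, $r\ge2$.} Put $m=u\,p_1^{k_1}$ and $n=\prod_{i=2}^{r}p_i^{k_i}$. Both are nonunits, and they are coprime because the primes are pairwise nonassociate. Choose $a,b\in R$ with $ma+nb=1$. Theorem~\ref{thm:coprime-lifting} supplies $g\in HR$ with $g(0)=0$ such that
\[
 f=(h'_m+h'_bg)(h'_n+h'_ag).
\]
Set $F_1=h'_m+h'_bg$ and $F_2=h'_n+h'_ag$. Since $g(0)=0$, we have
\[
 F_1(0)=m,\quad F_1(1)=b\,g(1),\qquad F_2(0)=n,\quad F_2(1)=a\,g(1).
\]

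\emph{Transferring the coprimality hypothesis.} We must check $(F_1(0),F_1(1))=R$ and $(F_2(0),F_2(1))=R$. This is the only point where care is needed. Evaluating the product rule at index $1$ gives
\[
 f(1)=F_1(0)F_2(1)+F_1(1)F_2(0).
\]
Let $q$ be a prime dividing both $F_1(0)$ and $F_1(1)$. Then $q\mid m$, so $q\nmid n$. Also $q\mid f(0)$, so the hypothesis gives $q\nmid f(1)$. But $q$ divides both terms $F_1(0)F_2(1)$ and $F_1(1)F_2(0)$, hence $q\mid f(1)$, a contradiction. The same argument applies to $F_2$. Alternatively, one can compute directly: $f(1)=g(1)$, so a prime dividing $F_1(0)=m$ and $F_1(1)=bg(1)$ must divide $b$ or $f(1)$. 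It cannot divide $f(1)$, and it cannot divide $b$ because $q\mid m$ and $q\mid b$ would force $q\mid ma+nb=1$.

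\emph{Conclusion.} $F_1$ is irreducible by the base case. $F_2$ satisfies the hypotheses of the theorem with the $r-1$ primes $p_2,\dots,p_r$, so by induction $F_2=g_2\cdots g_r$ with each $g_i$ irreducible and $g_i(0)$ associate to $p_i^{k_i}$. Taking $g_1=F_1$ gives
\[
 f=g_1g_2\cdots g_r,
\]
with $g_1(0)=m$ associate to $p_1^{k_1}$, as required.
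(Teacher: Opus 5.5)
Your proof is correct and follows essentially the paper's route: split $f$ with Theorem~\ref{thm:coprime-lifting}, show each factor's index-$1$ coefficient is not divisible by its prime, and conclude with Theorem~\ref{thm:prime-power-test}. The only difference is bookkeeping: you carry $(F(0),F(1))=R$ through each binary split by induction, while the paper splits completely first and then reads off $p_i\nmid g_i(1)$ by reducing $f(1)=\sum_i g_i(1)\prod_{j\ne i}g_j(0)$ modulo $p_i$.
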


\begin{proof}
Apply Theorem~\ref{thm:coprime-lifting} repeatedly to the pairwise coprime prime-power components of $f(0)$.  This gives a factorization $f=g_1\cdots g_r$ with $g_i(0)\sim p_i^{k_i}$.  For the coefficient of index $1$, formula \eqref{eq:multifactor} reduces to
\[
 f(1)=\sum_{i=1}^{r}g_i(1)\prod_{j\ne i}g_j(0).
\]
Fix $i$ and reduce this equality modulo $p_i$.  Every summand except the $i$-th is divisible by $p_i$, whereas $\prod_{j\ne i}g_j(0)$ is not.  Because $p_i\nmid f(1)$, it follows that $p_i\nmid g_i(1)$.  Theorem~\ref{thm:prime-power-test} now proves that each $f_i$ is irreducible.
\end{proof}

\begin{theorem}[Bounds from the constant coefficient]\label{thm:constant-bounds}
Let $R$ be a principal ideal domain, let $f\in HR$, and assume that $f(0)$ is a nonzero nonunit.  For every $s\in\lengthset(f)$,
\[
 \omega(f(0))\le s\le\Omega(f(0)).
\]
Consequently, a series with prime constant coefficient is irreducible.
\end{theorem}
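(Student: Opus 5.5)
The plan is to fix an arbitrary irreducible factorization $f=q_1\cdots q_s$ and evaluate it at $0$. By \eqref{eq:multifactor} with $n=0$, or simply by iterating \eqref{eq:hurwitz-product}, we get
\[
 f(0)=q_1(0)\cdots q_s(0).
\]
Since $f(0)\neq 0$ and $R$ is a domain, every $q_i(0)$ is nonzero. Each $q_i$ is irreducible, hence a nonunit. By Lemma~\ref{lem:unit}, each $q_i(0)$ is therefore a nonzero nonunit of $R$. Both bounds will be read off from this single identity in the unique factorization domain $R$.

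\emph{Upper bound.} $\Omega$ is additive on nonzero elements of a UFD, so
\[
 \Omega(f(0))=\sum_{i=1}^{s}\Omega(q_i(0)).
\]
Each $q_i(0)$ is a nonzero nonunit, so $\Omega(q_i(0))\ge1$. Summing gives $s\le\Omega(f(0))$.

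\emph{Lower bound.} This is the step that uses Hurwitz structure, via Theorem~\ref{thm:coprime-lifting}. Suppose some $q_i(0)$ had two nonassociate prime divisors $p$ and $p'$. Write $q_i(0)=p^a c$ with $p\nmid c$, where $a\ge1$. Then $m=p^a$ and $n=c$ are coprime nonunits, since $p'\mid c$. In a PID coprimality means $(m,n)=R$, so Theorem~\ref{thm:coprime-lifting} makes $q_i$ reducible, a contradiction. Hence each $q_i(0)$ is associate to a prime power, i.e.\ $\omega(q_i(0))=1$. Every prime divisor of $f(0)$ divides some $q_i(0)$, so
\[
 \omega(f(0))\le\sum_{i=1}^{s}\omega(q_i(0))=s.
\]
The main point to check is that Theorem~\ref{thm:coprime-lifting} applies to each factor $q_i$, whose constant coefficient is a nonzero nonunit. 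This holds by the first paragraph.

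\emph{Final sentence.} If $f(0)$ is prime, then $f$ is a nonunit by Lemma~\ref{lem:unit}. I would argue irreducibility directly rather than through $\lengthset(f)$, which could a priori be empty in a non-atomic setting. If $f=gh$, then $f(0)=g(0)h(0)$, so one of $g(0),h(0)$ is a unit, and the corresponding factor is a unit by Lemma~\ref{lem:unit}. This is also the case $k=1$ of Theorem~\ref{thm:prime-power-test}. It is consistent with the bounds, which force $s=1$ since $\omega(f(0))=\Omega(f(0))=1$.
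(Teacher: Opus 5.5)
Your proposal is correct and follows essentially the same route as the paper. You evaluate the factorization at $0$, use Lemma~\ref{lem:unit} to get $s\le\Omega(f(0))$, and apply Theorem~\ref{thm:coprime-lifting} to each factor to force every $q_i(0)$ to be a prime power, giving $\omega(f(0))\le s$ (your explicit split $q_i(0)=p^a c$ is the paper's ``two coprime nonunit groups''); your direct argument for the final sentence is exactly the $k=1$ case of Theorem~\ref{thm:prime-power-test}, which is what the paper cites.
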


\begin{proof}
Take an irreducible factorization $f=g_1\cdots g_s$.  Since each $g_i$ is a nonunit, Lemma~\ref{lem:unit} implies that every $g_i(0)$ is a nonunit.  The equality
\[
 f(0)=g_1(0)\cdots g_s(0)
\]
therefore yields $s\le\Omega(f(0))$.

If some $g_i(0)$ had two nonassociate prime divisors, its prime-power factors could be divided into two coprime nonunit groups.  Theorem~\ref{thm:coprime-lifting}, applied to $g_i$, would then contradict the irreducibility of $g_i$.  Hence every $g_i(0)$ is associate to a power of a single prime.  At least $\omega(f(0))$ such factors are required to account for all distinct prime divisors of $f(0)$.  The final assertion also follows directly from Theorem~\ref{thm:prime-power-test} with $k=1$.
\end{proof}
\begin{example}%[The upper length bound is attained]\label{ex:upper-bound}
	Let $p$ be prime.  Then
	\(
	f=h'_p+h_2
	\)
	is irreducible in $H(\Z)$ by Theorem~\ref{thm:constant-bounds}.  Therefore, for $n\ge2$,
	\(
	g=f^n
	\)
	has an irreducible factorization of length $n$.  Since $g(0)=p^n$, Theorem~\ref{thm:constant-bounds} also gives the upper bound $n$, so the displayed factorization has maximal possible length.
\end{example}

\begin{corollary}[Squarefree constant coefficient]\label{cor:squarefree}
Under the assumptions of Theorem~\ref{thm:constant-bounds}, if $f(0)$ is squarefree, then
\[
 \lengthset(f)\subseteq\{\omega(f(0))\}.
\]
Thus every irreducible factorization, whenever one exists, has exactly $\omega(f(0))$ factors.
\end{corollary}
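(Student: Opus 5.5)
The corollary should follow from Theorem~\ref{thm:constant-bounds} once we note that squarefreeness makes the two bounds coincide. Write $f(0)=u\,p_1\cdots p_r$ with $u$ a unit and $p_1,\dots,p_r$ pairwise nonassociate primes; this is the meaning of squarefree in the principal ideal domain $R$, which is a unique factorization domain. Then every prime occurs with multiplicity exactly one, so
\[
 \omega(f(0))=r=\Omega(f(0)).
\]

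Next take any $s\in\lengthset(f)$. Theorem~\ref{thm:constant-bounds} applies, since $f(0)$ is a nonzero nonunit by assumption, and it gives $\omega(f(0))\le s\le\Omega(f(0))$. By the previous paragraph both ends equal $r$, so $s=\omega(f(0))$. Since $s$ was an arbitrary element of $\lengthset(f)$, this yields $\lengthset(f)\subseteq\{\omega(f(0))\}$.

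The final sentence of the corollary is a restatement of this containment: every irreducible factorization, if one exists, has exactly $\omega(f(0))$ factors. The inclusion is deliberately not claimed to be an equality, because the statement does not assert that an irreducible factorization exists. For completeness one could remark that existence does hold here. Applying Theorem~\ref{thm:coprime-lifting} repeatedly splits $f$ into factors $g_1,\dots,g_r$ with $g_i(0)\sim p_i$, and each $g_i$ is irreducible by the final assertion of Theorem~\ref{thm:constant-bounds}. So in fact $\lengthset(f)=\{\omega(f(0))\}$, but this is not needed.

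There is no real obstacle in this argument. The only point requiring care is the identification $\omega=\Omega$ for squarefree elements up to units, and that rests on unique factorization in $R$.
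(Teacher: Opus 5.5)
Your proof is correct and matches the paper's: squarefreeness gives $\omega(f(0))=\Omega(f(0))$, so the two bounds of Theorem~\ref{thm:constant-bounds} coincide and force $s=\omega(f(0))$. Your extra remark that $\lengthset(f)$ is actually nonempty is also correct, via repeated use of Theorem~\ref{thm:coprime-lifting} plus irreducibility of series with prime constant coefficient, though the stated inclusion does not need it.
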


\begin{proof}
For a squarefree element, $\omega(f(0))=\Omega(f(0))$.  The result follows immediately from Theorem~\ref{thm:constant-bounds}.
\end{proof}

\begin{theorem}[A selected coefficient over $\Z$]\label{thm:selected-Z}
Let $f\in H(\Z)$ and suppose that $f(0)=\pm p^k$, where $p$ is prime and $k\ge1$.  If $p\nmid f(j)$ for some $j\ge1$, then every $s\in\lengthset(f)$ satisfies
\[
 s\le\min\{k,j\}.
\]
In particular, $f$ is irreducible when $k=1$ or $j=1$.
\end{theorem}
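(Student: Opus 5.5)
The plan is to bound the length of an \emph{arbitrary} factorization of $f$ into nonunits, not only irreducible ones. The bound $s\le k$ then comes from the constant coefficient, and the bound $s\le j$ from a divisibility argument on the $j$-th coefficient using \eqref{eq:multifactor}.

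\textbf{Bound by $k$.} Let $f=g_1\cdots g_s$ with every $g_t$ a nonunit of $H(\Z)$. By Lemma~\ref{lem:unit}, each $g_t(0)$ is a nonunit of $\Z$. Evaluating at $0$ gives
\[
 \pm p^k=g_1(0)\cdots g_s(0).
\]
Each $g_t(0)$ is a nonunit divisor of $p^k$, so $p\mid g_t(0)$ for every $t$, and also $s\le k$. For irreducible factorizations this is already contained in Theorem~\ref{thm:constant-bounds}, since $\Omega(\pm p^k)=k$.

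\textbf{Bound by $j$.} Suppose, for contradiction, that $s>j$. Expand $f(j)$ by \eqref{eq:multifactor}:
\[
 f(j)=\sum_{i_1+\cdots+i_s=j}\binom{j}{i_1,\ldots,i_s}\prod_{t=1}^{s}g_t(i_t).
\]
In each index tuple, at most $j$ of the $i_t$ are positive, because they are nonnegative integers with sum $j$. Since $s>j$, at least one index satisfies $i_t=0$. The corresponding factor $g_t(0)$ is divisible by $p$, so every summand is divisible by $p$. Hence $p\mid f(j)$, contradicting the hypothesis, and therefore $s\le j$.

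\textbf{Irreducibility.} If $k=1$ or $j=1$, the argument above shows that $f$ cannot be a product of two nonunits. Also $f$ is a nonunit, because $f(0)=\pm p^k$ is not a unit of $\Z$. So $f$ is irreducible.

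The only delicate step is the pigeonhole observation that $s>j$ forces a zero index in every term. Once that is noted, the rest is bookkeeping. The argument does not use anything specific to $\Z$ beyond the fact that $p$ is prime, so it works verbatim over any principal ideal domain.
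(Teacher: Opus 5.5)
Your proposal is correct and follows essentially the same argument as the paper: comparing constant coefficients gives $s\le k$, and the pigeonhole observation in \eqref{eq:multifactor} forces $p\mid f(j)$ whenever $s>j$. The only minor difference is that you prove the bound for arbitrary factorizations into nonunits and read irreducibility off it directly, whereas the paper treats the case $k=1$ or $j=1$ by citing Theorem~\ref{thm:prime-power-test}.
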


\begin{proof}
Let $f=g_1\cdots g_s$ be an irreducible factorization.  Each $g_t(0)$ is divisible by $p$, and comparison of constant coefficients gives $s\le k$.  Suppose that $s>j$.  For every $s$-tuple $(i_1,\ldots,i_s)$ of nonnegative integers whose sum is $j$, at least one $i_t$ equals $0$.  The corresponding term in \eqref{eq:multifactor} contains $g_t(0)$ and is therefore divisible by $p$.  Hence $p\mid f(j)$, a contradiction.  Thus $s\le j$ as well.  When $k=1$ or $j=1$, irreducibility follows directly from Theorem~\ref{thm:prime-power-test}.
\end{proof}
\begin{example}%[A bound determined by a higher coefficient]\label{ex:higher-coefficient}
	Let $p$ be prime, let $1\le j<k$, and let $f\in H(\Z)$.  Then
	\[
	g=h'_{p^k}\pm h_{j+1}+h_{j+2}f
	\]
	has $g(j)=\pm1$.  By Theorem~\ref{thm:selected-Z}, every irreducible factorization of $g$ has at most $j$ factors.  When $j=1$, the same theorem gives irreducibility.
\end{example}

\begin{theorem}[A selected coefficient over a DVR]\label{thm:selected-DVR}
Let $(R,v)$ be a discrete valuation domain with uniformizer $\pi$, with the convention $v(0)=\infty$.  Let $f\in HR$ satisfy
\[
 f(0)=u\pi^k,
 \qquad u\in\U(R),\quad k\ge1.
\]
If $v(f(j))=\ell<\infty$ for some $j\ge1$, then every $s\in\lengthset(f)$ satisfies
\[
 s\le\min\{k,j+\ell\}.
\]
\end{theorem}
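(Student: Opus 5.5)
The plan is to take an arbitrary irreducible factorization $f=g_1\cdots g_s$ and prove the two inequalities $s\le k$ and $s\le j+\ell$ separately, each by counting valuations. This is the valuation-theoretic refinement of the proof of Theorem~\ref{thm:selected-Z}. There, a term with a zero index was merely divisible by $p$; here I will count \emph{how many} zero indices each term must have.

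For the first bound I argue exactly as in Theorem~\ref{thm:constant-bounds}. Each $g_t$ is a nonunit, so Lemma~\ref{lem:unit} gives $g_t(0)\notin\U(R)$, that is, $v(g_t(0))\ge1$. Since $f(0)=g_1(0)\cdots g_s(0)$, additivity of $v$ gives
\[
 k=v(f(0))=\sum_{t=1}^{s}v(g_t(0))\ge s .
\]

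For the second bound I expand $f(j)$ by \eqref{eq:multifactor}:
\[
 f(j)=\sum_{i_1+\cdots+i_s=j}\binom{j}{i_1,\ldots,i_s}\prod_{t=1}^{s}g_t(i_t).
\]
Fix a tuple $(i_1,\ldots,i_s)$ with sum $j$. At most $j$ of the $i_t$ are positive, so at least $s-j$ of them equal $0$. Each index $t$ with $i_t=0$ contributes the factor $g_t(0)$, whose valuation is at least $1$. Every other factor $g_t(i_t)$ lies in $R$ and so has valuation $\ge0$. The multinomial coefficient is the image in $R$ of an integer, so its valuation is also $\ge0$; if that image is zero the term vanishes and is harmless. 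Hence every term has valuation at least $\max\{s-j,0\}$. By the ultrametric inequality, $\ell=v(f(j))\ge s-j$, which gives $s\le j+\ell$. Combining with the first bound yields $s\le\min\{k,j+\ell\}$.

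I do not expect a genuine obstacle; the only point needing care is the term-by-term valuation estimate. One must observe that the binomial and multinomial coefficients never lower valuations, since they lie in the image of $\Z$. No unit hypothesis on factorials is needed here, in contrast with the later Newton-polygon results, because only a lower bound on valuations is used.
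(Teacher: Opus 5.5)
Your proposal is correct and follows the paper's proof essentially verbatim: $s\le k$ comes from additivity of $v$ on $f(0)=g_1(0)\cdots g_s(0)$ with each $v(g_t(0))\ge1$, and $s\le j+\ell$ from the fact that every term of \eqref{eq:multifactor} for $f(j)$ has at least $s-j$ zero indices. Your explicit remark that the multinomial coefficients have nonnegative valuation, so no factorial-unit hypothesis is needed, is a correct and useful clarification.
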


\begin{proof}
Write an irreducible factorization as $f=g_1\cdots g_s$.  Since $g_t(0)$ is a nonunit, write
\[
 g_t(0)=u_t\pi^{k_t},
 \qquad k_t\ge1.
\]
The constant coefficient gives $k_1+\cdots+k_s=k$, so $s\le k$.  In each summand of \eqref{eq:multifactor} contributing to $f(j)$, at least $s-j$ of the indices are zero.  The product of the associated constant coefficients has valuation at least $s-j$, while all remaining coefficients and the multinomial coefficient have nonnegative valuation.  Therefore
\[
 v(f(j))\ge s-j.
\]
If $s>j+\ell$, this inequality contradicts $v(f(j))=\ell$.
\end{proof}
\begin{example}%[A valuation bound over $\Z_{(p)}$]\label{ex:dvr-bound}
	Let $p$ be prime, $k\ge2$, and $f\in H(\Z_{(p)})$.  Consider
	\[
	\begin{aligned}
		g={}&h'_{p^k/(p+1)}
		+h'_p h_2+h'_p h_3+\cdots+h'_p h_k
		+h'_{p+1}h_{k+1}+h_{k+2}f.
	\end{aligned}
	\]
	Its constant coefficient has $p$-adic valuation $k$, whereas $v_p(g(1))=1$.  Theorem~\ref{thm:selected-DVR} shows that every irreducible factorization of $g$ has length at most $2$.
\end{example}

\begin{corollary}[Valuation envelope]\label{cor:valuation-envelope}
In the setting of Theorem~\ref{thm:selected-DVR}, every $s\in\lengthset(f)$ satisfies
\[
 s\le \min\bigl(\{k\}\cup\{j+v(f(j))~|~j\ge1\}\bigr).
\]
\end{corollary}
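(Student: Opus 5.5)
The corollary follows from Theorem~\ref{thm:selected-DVR} by taking the minimum over all admissible choices of $j$. We fix $s\in\lengthset(f)$ and show that $s$ is a lower bound for every element of the set
\[
 \{k\}\cup\{j+v(f(j))~|~j\ge1\}.
\]
The minimum then exists and is at least $s$. The set is a nonempty subset of $\N\cup\{\infty\}$, since it contains the integer $k$, so it has a least element.

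The element $k$ is handled directly. Theorem~\ref{thm:selected-DVR} gives $s\le k$ without using any selected coefficient; equivalently, we compare constant coefficients in $f=g_1\cdots g_s$, where each $g_t(0)$ has valuation $k_t\ge1$ and $k_1+\cdots+k_s=k$.

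Next fix $j\ge1$ and split into two cases according to $v(f(j))$. If $v(f(j))=\ell<\infty$, then Theorem~\ref{thm:selected-DVR}, applied with this $j$, gives $s\le\min\{k,j+\ell\}\le j+v(f(j))$. If $f(j)=0$, the convention $v(0)=\infty$ makes $j+v(f(j))=\infty$, and $s\le\infty$ holds trivially. In both cases $s\le j+v(f(j))$.

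So $s$ bounds every member of the set from below, and hence $s$ is at most its minimum. The only point needing care is the bookkeeping for indices with $f(j)=0$, which contribute $\infty$ and do not affect the minimum because $k$ is finite. We also note that Theorem~\ref{thm:selected-DVR} applies to each $j$ independently, since its hypotheses on $f(0)$ do not depend on $j$. This makes the minimization legitimate.
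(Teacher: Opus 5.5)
Your proposal is correct and follows the paper's route: the corollary is stated without a separate proof, as the immediate consequence of applying Theorem~\ref{thm:selected-DVR} to each $j$ and taking the minimum. Your handling of indices with $f(j)=0$ through the convention $v(0)=\infty$, together with the finite element $k$ guaranteeing the minimum exists, is exactly the bookkeeping that this implicit argument needs.
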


\section{Hurwitz--Newton polygons}\label{sec:newton}

Let $(R,v)$ be a discrete valuation domain.  The binomial coefficients in \eqref{eq:hurwitz-product} need not be units, and their valuations must be controlled before a polygonal product argument can be used.

\begin{definition}[Binomial-unit range]\label{def:binomial-range}
An integer $N\ge1$ is called a binomial-unit range for $v$ if
\[
 v(t!)=0\qquad(0\le t\le N).
\]
Equivalently, every binomial or multinomial coefficient whose upper index is at most $N$ is a unit of $R$.
\end{definition}

For a nonzero $f\in HR$, the Hurwitz--Newton polygon $\NP(f)$ is the lower convex polygon determined by the points
\[
 (i,v(f(i)))\qquad\bigl(i\in\supp(f)\bigr).
\]
Only the negative-slope part will be needed below.

\begin{proposition}[Product rule in a binomial-unit range]\label{prop:polygon-product}
Let $N$ be a binomial-unit range for $v$, let $f,g\in HR$, and put $F=fg$.  Suppose that the negative-slope part of $\NP(F)$ terminates at a point $(n,0)$ with $n\le N$.  Through abscissa $n$, its edges are obtained by combining the negative-slope edges of $\NP(f)$ and $\NP(g)$ according to the Newton-polygon product rule.  In particular, equal-slope edges contribute additively in both horizontal length and vertical drop.
\end{proposition}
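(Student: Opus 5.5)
Since every multinomial coefficient with upper index at most $N$ is a unit, the plan is to reduce to the classical Newton-polygon product theorem for ordinary power series, working only with coefficients of index at most $n\le N$. Put $a_i=f(i)$, $b_j=g(j)$, and $c_k=F(k)$. By \eqref{eq:hurwitz-product}, $c_k=\sum_{i+j=k}\binom{k}{i}a_ib_j$. For $k\le N$ we have $\binom{k}{i}=k!/(i!\,j!)$, and all three factorials are units. So I would pass to the ``divided'' coefficients $\tilde a_i=a_i/i!$, $\tilde b_j=b_j/j!$, $\tilde c_k=c_k/k!$ for indices up to $N$. These satisfy the ordinary Cauchy convolution $\tilde c_k=\sum_{i+j=k}\tilde a_i\tilde b_j$, and they have the same valuations: $v(\tilde a_i)=v(a_i)$, and similarly for $b$ and $c$. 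Thus the polygons built from the truncations $\tilde f=\sum_{i\le N}\tilde a_iX^i$, and likewise $\tilde g$, agree with $\NP(f)$ and $\NP(g)$ on abscissae $\le N$. Likewise, $\tilde f\tilde g$ agrees with $\tilde F$ modulo $X^{N+1}$.

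The second step locates where the relevant polygons end. Since $(n,0)$ is the right endpoint of the negative-slope part of $\NP(F)$, we have $v(c_n)=0$ and $v(c_k)\ge 0$ for all $k$. From $v(c_n)=0$ and the convolution, some pair with $i_0+j_0=n$ has $v(a_{i_0})=v(b_{j_0})=0$. I would take $i_0$ and $j_0$ minimal with $v(a_{i_0})=0$ and $v(b_{j_0})=0$, respectively. I also need $v(a_i),v(b_j)\ge0$ for all indices, which holds because the coefficients lie in $R$. The standard extremal argument then shows that the index $i_0+j_0$ is the first index where $\tilde c$ has valuation $0$: the unique minimal-valuation term there is $\tilde a_{i_0}\tilde b_{j_0}$, and at any smaller index every term has positive valuation. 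Hence $n=i_0+j_0$. So the negative-slope parts of $\NP(f)$ and $\NP(g)$ end at $(i_0,0)$ and $(j_0,0)$, both with abscissa $\le n\le N$.

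Third, I would run the classical vertex argument on each vertex of the Minkowski-sum polygon. Fix a slope $-\lambda$ and let $[i_1,i_2]$ and $[j_1,j_2]$ be the ranges where $\NP(f)$ and $\NP(g)$ touch their supporting lines of slope $-\lambda$. Minimizing $v(\tilde a_i\tilde b_j)+\lambda(i+j)$ shows the following. At $k=i_1+j_1$, and likewise at $i_2+j_2$, there is a unique term attaining the minimum, so the strict triangle inequality gives exact valuations. Every other index lies on or above the corresponding line. Consequently the lower convex hull of the points $(k,v(c_k))$ for $k\le n$ is exactly the concatenation of the edges of $\NP(f)$ and $\NP(g)$ sorted by slope. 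Equal slopes merge, adding horizontal lengths and vertical drops. All indices involved are at most $n\le N$, so only unit binomials enter.

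The main obstacle is the uniqueness claim at the vertices. Indices with $i>i_0$ or $j>j_0$ could in principle contribute terms of equal valuation at the relevant $k\le n$. I would rule this out by checking that such terms lie strictly above the supporting line, using the nonnegativity of valuations beyond $i_0$ and $j_0$. A related point is that $\NP(F)$ is a polygon over all of $\supp(F)$, not just over indices $\le n$. Its negative-slope part is nevertheless determined by points with $k\le n$, because $v(c_k)\ge0=v(c_n)$ for $k>n$.
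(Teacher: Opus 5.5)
Your proposal is correct and follows the same route as the paper. The paper likewise divides the coefficients by factorials, via $\Phi_N(f)=\sum_{i\le N} f(i)X^i/i!$, which turns the Hurwitz product into an ordinary Cauchy product modulo $X^{N+1}$ with unchanged valuations, and then cites the classical Newton-polygon product theorem. Your additional step showing $n=i_0+j_0$, so that every relevant vertex lies at an index $\le N$, together with your explicit vertex argument, supplies the justification the paper leaves implicit for applying a polynomial theorem to a mere congruence modulo $X^{N+1}$. One small correction: the truncated and full polygons need only agree on their negative-slope parts, not on all abscissae $\le N$ as your first step claims, though nothing later in your argument depends on the stronger claim.
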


\begin{proof}
Introduce the degree-$N$ polynomial associated with $f$ by
\[
 \Phi_N(f)=\sum_{i=0}^{N}\frac{f(i)}{i!}X^i
 \in R[X]/(X^{N+1}).
\]
All denominators are units because $N$ is a binomial-unit range.  For $n\le N$, the coefficient of $X^n$ in $\Phi_N(f)\Phi_N(g)$ is
\[
 \sum_{i=0}^{n}\frac{f(i)g(n-i)}{i!(n-i)!}
 =\frac{1}{n!}\sum_{i=0}^{n}\binom{n}{i}f(i)g(n-i)
 =\frac{(fg)(n)}{n!}.
\]
Thus
\[
 \Phi_N(fg)=\Phi_N(f)\Phi_N(g)
 \quad\text{in }R[X]/(X^{N+1}).
\]
Moreover, $v(f(i)/i!)=v(f(i))$ for $i\le N$.  Hence the relevant polygons are unchanged by $\Phi_N$, and the polynomial product theorem for Newton polygons applies; see, for example, Dumas \cite{Dumas1906}.
\end{proof}

\begin{theorem}[Primitive-edge irreducibility]\label{thm:primitive-edge}
Let $(R,v)$ be a discrete valuation domain with uniformizer $\pi$, and let $f\in HR$ satisfy
\[
 f(0)=u\pi^k,
 \qquad u\in\U(R),\quad k\ge1.
\]
Assume that an integer $n\ge1$ has the following properties:
\begin{enumerate}[label=\textup{(\roman*)}]
\item $n$ lies in a binomial-unit range for $v$;
\item $v(f(n))=0$ and $\gcd(k,n)=1$;
\item for $1\le i<n$,
\[
 v(f(i))>\frac{k(n-i)}{n}.
\]
\end{enumerate}
Then $f$ is irreducible in $HR$.
\end{theorem}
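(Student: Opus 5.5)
Suppose $f=gh$ with $g,h$ nonunits; we aim for a contradiction. By Lemma~\ref{lem:unit}, $g(0)=u_1\pi^{a}$ and $h(0)=u_2\pi^{b}$ with $a,b\ge1$ and $a+b=k$. The plan is to compare the truncated Newton data of $g$ and $h$ with the single primitive segment of $\NP(f)$ from $(0,k)$ to $(n,0)$. Hypotheses (ii) and (iii) say exactly that on $[0,n]$ the lower convex hull of the points $(i,v(f(i)))$ is this single edge. It has slope $-k/n$, and since $\gcd(k,n)=1$ it contains no lattice points other than its endpoints.

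\textbf{Step 1.} Work with the truncations $\Phi_n(f)=\Phi_n(g)\Phi_n(h)$ in $R[X]/(X^{n+1})$, as in the proof of Proposition~\ref{prop:polygon-product}. This is legitimate by (i), and it keeps valuations unchanged up to index $n$. Let $\ell_g=\min\{i\le n: v(g(i))=0\}$ and define $\ell_h$ likewise, with the convention $\ell=\infty$ if no such index exists.

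First we show $\ell_g+\ell_h\le n$. Reduce modulo $\pi$: the residue of $\Phi_n(g)$ has lowest-order term $X^{\ell_g}$ (or vanishes), and similarly for $h$. Since the coefficient of $X^n$ in the product is a unit, $(fg)(n)/n!$ has valuation $0$. In the residue field, the product of the two reductions is $\overline{c}X^{\ell_g+\ell_h}+\cdots$, and it has nonzero $X^n$ coefficient. Also, $v(f(i))>0$ for $i<n$, so the product's reduction is exactly $\overline{c'}X^n$ modulo $X^{n+1}$. Hence $\ell_g+\ell_h=n$ exactly, with both finite. Moreover $\ell_g,\ell_h\ge1$, because $a,b\ge1$.

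\textbf{Step 2.} Apply the product rule, Proposition~\ref{prop:polygon-product}. The negative-slope part of $\NP(f)$ terminates at $(n,0)$ with $n$ in the binomial-unit range. By Step 1, the negative-slope parts of the truncated polygons of $g$ and $h$ run from $(0,a)$ to $(\ell_g,0)$ and from $(0,b)$ to $(\ell_h,0)$. Their Minkowski combination is the segment from $(0,k)$ to $(n,0)$.

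A single segment decomposes only into segments of the same slope. So $\NP(g)$ restricted to $[0,\ell_g]$ is a single edge of slope $-k/n$ from $(0,a)$ to $(\ell_g,0)$. This gives $a/\ell_g=k/n$, so $an=k\ell_g$. Since $\gcd(k,n)=1$, $n$ divides $\ell_g$. But $1\le\ell_g<n$, which is a contradiction.

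\textbf{Main obstacle.} The hardest step is justifying the polygon product rule rigorously in the truncated ring. I will give a direct argument rather than citing Dumas. Suppose $\NP(g)|_{[0,\ell_g]}$ had an edge of slope $\sigma\ne -k/n$. Take the steepest slope $\sigma_1$ of $g$ and of $h$, say $g$'s is steeper. Consider the last vertex index $i_0$ of $g$ on its steepest edge, and the index $0$ for $h$.

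The coefficient of $X^{i_0}$ in $\Phi_n(f)$ is then a sum with a unique term of minimal valuation, namely $a'+b$, where $a'=v(g(i_0))$. This is the standard vertex-uniqueness argument, and it uses that the multinomial factors are units. So $v(f(i_0))=a'+b$. However, this point lies strictly below or on the line through $(0,k)$ with slope $-k/n$ in a way that contradicts (iii) or lattice-primitivity.

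To keep the argument clean, I would simply run it on the first vertex where the polygons of $g$ and $h$ are combined, and then induct along the edges. This reduces everything to the classical Dumas argument for polynomials of degree at most $n$.
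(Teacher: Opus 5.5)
Your proposal is correct and follows the paper's own route: pass to $\Phi_n$ in $R[X]/(X^{n+1})$, invoke the Newton-polygon product rule (Proposition~\ref{prop:polygon-product}, ultimately Dumas), and use $\gcd(k,n)=1$ to rule out a factor edge of slope $-k/n$ whose horizontal length is strictly less than $n$. Your Step~1 reduction modulo $\pi$, which gives $\ell_g+\ell_h=n$ with $\ell_g,\ell_h\ge1$, supplies an explicit justification for the paper's bare claim that the factors' edges have positive lengths $r,s$ with $r+s=n$.
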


\begin{proof}
The stated inequalities place every point with abscissa strictly between $0$ and $n$ above the segment joining $(0,k)$ to $(n,0)$.  Thus the negative part of $\NP(f)$ is the single edge of slope $-k/n$.

Suppose that $f=g_1g_2$ with $g_1$ and $g_2$ both nonunits.  Write
\[
 v(g_1(0))=\ell,
 \qquad
 v(g_2(0))=k-\ell,
 \qquad 0<\ell<k.
\]
Neither factor can have all its coefficients divisible by $\pi$, because $f(n)$ is a unit.  Proposition~\ref{prop:polygon-product} therefore supplies positive integers $r$ and $s$ with $r+s=n$ such that the negative edges contributed by $g_1$ and $g_2$ have the common slope $-k/n$.  Consequently,
\[
 \frac{\ell}{r}=\frac{k-\ell}{s}=\frac{k}{n},
\]
so $\ell n=kr$.  Since $\gcd(k,n)=1$, this implies $n\mid r$, which is impossible for $0<r<n$.  Hence no nontrivial factorization exists.
\end{proof}

The strict inequalities in Theorem~\ref{thm:primitive-edge} may equivalently be written as
\begin{equation}\label{eq:ceiling-bound}
 v(f(i))\ge \left\lfloor\frac{k(n-i)}{n}\right\rfloor+1
 \qquad(1\le i<n).
\end{equation}
\begin{example}%[A localized polynomial coefficient ring]\label{ex:localized-polynomial}
	Let $V=\Q[y]_{(1+y)}$.  This is a discrete valuation domain with uniformizer $1+y$.  Then
	\[
	f=h'_{(1+y)^8}+h'_{(1+y)^6}h_2
	+h'_{(1+y)^3}h_3+h'_y h_4
	\]
	has valuation points
	\[
	(0,8),\quad(1,6),\quad(2,3),\quad(3,0).
	\]
	The two interior points lie strictly above the segment from $(0,8)$ to $(3,0)$, and $\gcd(8,3)=1$.  Every integer factorial is a unit in $V$, so Theorem~\ref{thm:primitive-edge} proves that $f$ is irreducible in $H(V)$.
\end{example}

\begin{theorem}[Block-valuation criterion]\label{thm:block-criterion}
Let $f\in H(\Z)$ satisfy $f(0)=\pm p^k$, where $p$ is prime and $k\ge2$.  Suppose that $m\ge1$, $p>km+1$, and
\[
 p^{\ell}\mid f((k-\ell)m+i),
 \qquad
 p^{\ell+1}\nmid f((k-\ell)m+i)
\]
for every $1\le\ell\le k$ and $1\le i\le m$.  If $p\nmid f(km+1)$, then $f$ is irreducible in $H(\Z)$.
\end{theorem}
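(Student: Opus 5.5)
The plan is to reduce the statement to Theorem~\ref{thm:primitive-edge}, applied over the discrete valuation domain $\Z_{(p)}$ with $v=v_p$ and the choice $n=km+1$. First I would rewrite the hypotheses in valuation form. As $\ell$ runs from $k$ down to $1$ and $i$ from $1$ to $m$, the index $(k-\ell)m+i$ runs exactly once through $1,2,\ldots,km$. Hence every index $1\le j\le km$ is written uniquely as $j=(k-\ell)m+i$, and the hypothesis says
\[
 v_p\bigl(f(j)\bigr)=\ell .
\]
In addition $v_p(f(km+1))=0$ and $v_p(f(0))=k$, with $f(0)=\pm p^k$ and $\pm1$ a unit. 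So the valuation points form a descending staircase from $(0,k)$ to $(km+1,0)$.

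Next I would check the three conditions of Theorem~\ref{thm:primitive-edge} for $n=km+1$.
\begin{enumerate}[label=\textup{(\roman*)}]
\item Since $p>km+1$, no $t!$ with $t\le km+1$ is divisible by $p$. Thus $N=km+1$ is a binomial-unit range for $v_p$ on $\Z_{(p)}$.
\item We have $v_p(f(n))=0$, and $\gcd(k,km+1)=1$ because $km+1\equiv1\pmod k$.
\item For $j=(k-\ell)m+i$ we need $\ell>k(n-j)/n$. Multiplying by $n=km+1$ and expanding, this becomes $\ell(km+1)>k(km+1)-k(k-\ell)m-ki$, that is, $\ell>k-ki$. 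This holds because $i\ge1$ and $\ell\ge1$ give $ki+\ell>k$. So every interior point lies strictly above the segment from $(0,k)$ to $(n,0)$.
\end{enumerate}
Theorem~\ref{thm:primitive-edge} then shows that the image of $f$ is irreducible in $H(\Z_{(p)})$.

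The last step transfers irreducibility from $H(\Z_{(p)})$ back to $H(\Z)$. The inclusion $\Z\subset\Z_{(p)}$ induces a coefficientwise ring embedding $H(\Z)\hookrightarrow H(\Z_{(p)})$, since the product \eqref{eq:hurwitz-product} uses only integer binomial coefficients. Suppose $f=gh$ in $H(\Z)$ with $g$ and $h$ nonunits. By Lemma~\ref{lem:unit}, $g(0)$ and $h(0)$ are nonunit integers, and their product is $\pm p^k$. Hence both are, up to sign, positive powers of $p$, so both are nonunits in $\Z_{(p)}$. Lemma~\ref{lem:unit} over $\Z_{(p)}$ then makes $g$ and $h$ nonunits in $H(\Z_{(p)})$, which contradicts the irreducibility just obtained.

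I expect the main obstacle to be condition (iii): correctly identifying the indexing of the blocks and checking the strict inequality at the corners of the staircase, especially at $i=1$. The computation above shows the margin there is exactly $ki+\ell-k\ge\ell\ge1$, so it should go through. The transfer step is routine, but it relies on $f(0)$ having no prime divisor other than $p$, which is guaranteed by $f(0)=\pm p^k$.
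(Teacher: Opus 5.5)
Your proposal is correct and matches the paper's proof: you apply Theorem~\ref{thm:primitive-edge} over $\Z_{(p)}$ with $n=km+1$, verify the strict inequality through the same margin $\ell+k(i-1)>0$, and note that a nontrivial factorization in $H(\Z)$ stays nontrivial after localization. The only difference is that you write out the unit-criterion argument for the transfer step, where the paper states it in one line; the argument is the same.
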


\begin{proof}
Use the $p$-adic valuation on $\Z_{(p)}$.  The inequality $p>km+1$ makes $km+1$ a binomial-unit range.  For $t=(k-\ell)m+i$, the ordinate of that segment is
\[
 \frac{k(km+1-t)}{km+1}
 =\frac{k(\ell m+1-i)}{km+1}<\ell=v_p(f(t)),
\]
because $\ell(km+1)-k(\ell m+1-i)=k(i-1)+\ell>0$.  Thus all intermediate valuation points lie strictly above the segment.  Since $\gcd(k,km+1)=1$, Theorem~\ref{thm:primitive-edge} gives irreducibility in $H(\Z_{(p)})$.  Any nontrivial factorization in $H(\Z)$ would remain nontrivial after localization, so no such factorization can occur.
\end{proof}

\begin{example}%[A block-valuation family]\label{ex:block-family}
	Let $k\ge2$ and let $p>k+1$ be prime.  Put
	\[
	f=h_1+h_2+h_3+\cdots,
	\]
	and define
	\[
	g=h'_{\pm p^k}
	+h'_{p^k}h_2+h'_{p^{k-1}}h_3+\cdots+h'_p h_{k+1}
	+h_{k+2}f.
	\]
	The coefficient valuations in degrees $1,\ldots,k$ are $k,k-1,\ldots,1$, and the coefficient in degree $k+1$ is a $p$-adic unit.  Theorem~\ref{thm:block-criterion}, with $m=1$, proves that $g$ is irreducible in $H(\Z)$.
\end{example}
\begin{corollary}[Initial-divisibility criterion]\label{cor:initial-divisibility}
Let $f\in H(\Z)$ satisfy $f(0)=\pm p^k$, where $p$ is prime and $k\ge1$.  Assume that for some $n\ge1$,
\[
 p>n,
 \qquad \gcd(k,n)=1,
\]
\[
 p^k\mid f(i)\quad(1\le i<n),
 \qquad
 p\nmid f(n).
\]
Then $f$ is irreducible in $H(\Z)$.
\end{corollary}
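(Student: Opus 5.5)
The plan is to reduce to Theorem~\ref{thm:primitive-edge} over the discrete valuation domain $\Z_{(p)}$ with the $p$-adic valuation $v_p$ and uniformizer $p$, and then return to $H(\Z)$ by localization, as in the proof of Theorem~\ref{thm:block-criterion}.

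First, consider $f$ as an element of $H(\Z_{(p)})$ via the inclusion $\Z\subset\Z_{(p)}$. Its constant coefficient is $\pm p^k$, a unit times $p^k$ with $k\ge1$, so the basic hypothesis of Theorem~\ref{thm:primitive-edge} holds. For condition (i), the hypothesis $p>n$ gives $v_p(t!)=0$ for every $t\le n$, so $n$ is a binomial-unit range. For condition (ii), we have $v_p(f(n))=0$ because $p\nmid f(n)$, and $\gcd(k,n)=1$ is assumed. For condition (iii), take $1\le i<n$. Then $v_p(f(i))\ge k$, with $v_p(0)=\infty$ if $f(i)=0$. Also
\[
 \frac{k(n-i)}{n}<k,
\]
because $i\ge1$. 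So the strict inequality in (iii) holds. Theorem~\ref{thm:primitive-edge} then shows that $f$ is irreducible in $H(\Z_{(p)})$.

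Next, I transfer this back to $H(\Z)$. Suppose $f=gh$ in $H(\Z)$ with $g$ and $h$ nonunits. By Lemma~\ref{lem:unit}, $g(0)$ and $h(0)$ are nonunits in $\Z$. Since $g(0)h(0)=\pm p^k$, both are $\pm$ positive powers of $p$. They therefore stay nonunits in $\Z_{(p)}$, so by Lemma~\ref{lem:unit} again $g$ and $h$ are nonunits of $H(\Z_{(p)})$. This is a nontrivial factorization in $H(\Z_{(p)})$, which is a contradiction.

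The only delicate point is the localization step. A nonunit of $H(\Z)$ need not remain a nonunit in $H(\Z_{(p)})$ in general, since a factor whose constant coefficient is a prime $q\ne p$ would become a unit. Here that cannot happen, because $f(0)=\pm p^k$ forces each factor's constant coefficient to be a power of $p$. Also, if some $f(i)$ vanishes, the corresponding point is simply absent from $\NP(f)$, so condition (iii) is harmless.
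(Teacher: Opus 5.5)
Your proposal is correct and follows the paper's proof. Both verify hypotheses (i)--(iii) of Theorem~\ref{thm:primitive-edge} over $\Z_{(p)}$, using $p>n$ for the binomial-unit range and $v_p(f(i))\ge k>k(n-i)/n$ for the strict inequalities, and then return to $H(\Z)$. The only difference is that you write out the descent from $H(\Z_{(p)})$ to $H(\Z)$, which the paper leaves implicit; it is the content of Lemma~\ref{lem:localization} and the last sentence of the proof of Theorem~\ref{thm:block-criterion}.
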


\begin{proof}
The $p$-adic valuation of every coefficient with index $1\le i<n$ is at least $k$, which is strictly larger than $k(n-i)/n$.  Theorem~\ref{thm:primitive-edge} applies because $p>n$.
\end{proof}
\begin{example}%[Multiplication by a Hurwitz unit]\label{ex:unit-multiple}
	Let $p$ be prime and let $k,j\ge1$ with $\gcd(k,j)=1$ and $p>j$.  Then
	\[
	f=h_1+h_2+h_3+\cdots
	\]
	is a unit because $f(0)=1$.  Set
	\[
	g=(h'_{\pm p^k}+h_{j+1})f.
	\]
	For $1\le t<j$, the coefficient $g(t)$ is divisible by $p^k$, while
	\[
	g(j)\equiv1\pmod p.
	\]
	Corollary~\ref{cor:initial-divisibility} gives the irreducibility of $g$.  Since $f$ is a unit, $h'_{\pm p^k}+h_{j+1}$ is irreducible as well.
\end{example}

\begin{remark}\label{rem:binomial-warning}
The range condition is essential to the argument above.  When a residue characteristic divides one of the factorials up to the relevant index, some binomial coefficients in \eqref{eq:hurwitz-product} acquire positive valuation.  The polygon of a product may then contain valuation contributions that are absent from the coefficient polygons of the factors, and Proposition~\ref{prop:polygon-product} cannot be used without an additional correction term.
\end{remark}

\section{Localization and several prime divisors}\label{sec:localization}

\begin{lemma}[Localization reflects irreducibility]\label{lem:localization}
Let $R$ be a principal ideal domain, let $p$ be a prime element, and let $R_{(p)}$ be the localization at $(p)$.  Suppose that $f\in HR$ has constant coefficient associate to $p^k$ for some $k\ge1$.  If $f$ is irreducible in $H(R_{(p)})$, then it is irreducible in $HR$.
\end{lemma}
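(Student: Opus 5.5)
We argue by contraposition, in the style of the final step of Theorem~\ref{thm:block-criterion}: a nontrivial factorization in $HR$ stays nontrivial in $H(R_{(p)})$. First we check that $f$ is a nonzero nonunit in $HR$. Its constant coefficient is associate to $p^k$ with $k\ge1$, so it is a nonzero nonunit of $R$, and Lemma~\ref{lem:unit} shows that $f$ is not invertible. The same holds in $H(R_{(p)})$, since $p$ remains a nonunit in the local ring $R_{(p)}$.

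Next we use the natural inclusion. The map $R\hookrightarrow R_{(p)}$ is an injective ring homomorphism, and applying it coefficientwise gives an injective ring homomorphism $HR\hookrightarrow H(R_{(p)})$. This map respects the product \eqref{eq:hurwitz-product}, because the binomial coefficients are images of integers and are sent to themselves. So any equality $f=gh$ in $HR$ remains an equality in $H(R_{(p)})$.

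Now suppose $f=gh$ in $HR$ with $g,h$ nonunits. By Lemma~\ref{lem:unit}, both $g(0)$ and $h(0)$ are nonunits of $R$. Since $g(0)h(0)=f(0)\sim p^k$ and $R$ is a principal ideal domain, hence a UFD, both constant coefficients are associate to powers of $p$, namely $g(0)\sim p^a$ and $h(0)\sim p^b$ with $a+b=k$. Being nonunits forces $a,b\ge1$. Therefore $p$ divides both $g(0)$ and $h(0)$ inside $R_{(p)}$. These elements lie in the maximal ideal, so they are nonunits there, and Lemma~\ref{lem:unit} applied over $R_{(p)}$ shows that $g$ and $h$ are nonunits in $H(R_{(p)})$. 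This is a nontrivial factorization in $H(R_{(p)})$, contradicting irreducibility there.

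The only step needing care is this last one. A general nonunit of $R$ could become a unit after localization; for example, a prime $q$ not associate to $p$ does. The prime-power hypothesis on $f(0)$, together with unique factorization in $R$, rules this out, because every nonunit factor of $f(0)$ is divisible by $p$. The rest is routine.
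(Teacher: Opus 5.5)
Your proof is correct and follows the paper's argument. In a nontrivial factorization $f=gh$ in $HR$, both constant coefficients are divisible by $p$, so they remain nonunits in $R_{(p)}$, and Lemma~\ref{lem:unit} keeps the factorization nontrivial in $H(R_{(p)})$. You also spell out two points the paper leaves implicit: that $f$ is a nonunit, and that the coefficientwise inclusion $HR\hookrightarrow H(R_{(p)})$ is a ring homomorphism.
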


\begin{proof}
Assume that $f=gh$ in $HR$ with $g$ and $h$ nonunits.  Since $g(0)h(0)$ is associate to $p^k$, both constant coefficients are divisible by positive powers of $p$.  They remain nonunits in $R_{(p)}$, and Lemma~\ref{lem:unit} shows that $g$ and $h$ remain nonunits in $H(R_{(p)})$.  Thus the same equality would be a nontrivial factorization after localization.
\end{proof}

\begin{theorem}[Independent local edges]\label{thm:several-primes}
Let $R$ be a principal ideal domain and let $f\in HR$ satisfy
\[
 f(0)=u\prod_{i=1}^{r}p_i^{k_i},
 \qquad r\ge2,
\]
where $u$ is a unit, the $p_i$ are pairwise nonassociate primes, and $k_i\ge1$.  For each $i$, let $v_i$ be the normalized valuation on $R_{(p_i)}$.  Suppose that there is an integer $n_i\ge1$ such that
\begin{enumerate}[label=\textup{(\roman*)}]
\item $n_i$ lies in a binomial-unit range for $v_i$;
\item $v_i(f(n_i))=0$ and $\gcd(k_i,n_i)=1$;
\item for $1\le t<n_i$,
\[
 v_i(f(t))>\frac{k_i(n_i-t)}{n_i}.
\]
\end{enumerate}
Then
\[
 f=g_1\cdots g_r
\]
for irreducible series $g_i\in HR$ with $g_i(0)\sim p_i^{k_i}$.
\end{theorem}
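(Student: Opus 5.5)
We first produce the factorization and then prove that each factor is irreducible. Applying Theorem~\ref{thm:coprime-lifting} $r-1$ times to the pairwise coprime prime-power components $p_1^{k_1},\dots,p_r^{k_r}$ of $f(0)$, exactly as in the proof of Theorem~\ref{thm:prime-decomposition}, gives $f=g_1\cdots g_r$ in $HR$ with $g_i(0)\sim p_i^{k_i}$. The unit $u$ is absorbed into one factor. By Lemma~\ref{lem:localization}, it is enough to show that $g_i$ is irreducible in $H(R_{(p_i)})$ for each $i$.

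Fix $i$ and work over the discrete valuation domain $R_{(p_i)}$ with valuation $v_i$. For $j\ne i$, $g_j(0)$ is not divisible by $p_i$, so $g_j(0)$ is a unit of $R_{(p_i)}$ and $g_j$ is a unit of $H(R_{(p_i)})$ by Lemma~\ref{lem:unit}. Put $w=\prod_{j\ne i}g_j$; it is a unit, and $g_i=f\,w^{-1}$ in $H(R_{(p_i)})$. Irreducibility is invariant under multiplication by units, so $g_i$ is irreducible if and only if $f$ is irreducible in $H(R_{(p_i)})$.

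It remains to see that $f$ is irreducible in $H(R_{(p_i)})$. We cannot apply Theorem~\ref{thm:primitive-edge} to $f$ directly, because over $R_{(p_i)}$ the constant coefficient is $f(0)=u'p_i^{k_i}$ with $u'=u\prod_{j\ne i}p_j^{k_j}$, which is a unit of $R_{(p_i)}$. So $v_i(f(0))=k_i$, and $f(0)$ has exactly the form $u'\pi^{k_i}$ that Theorem~\ref{thm:primitive-edge} requires, with $\pi=p_i$. Hypotheses (i)--(iii) of the present theorem are precisely conditions (i)--(iii) of Theorem~\ref{thm:primitive-edge} for $n=n_i$ and $k=k_i$. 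That theorem therefore gives irreducibility of $f$ in $H(R_{(p_i)})$, hence of $g_i$, and Lemma~\ref{lem:localization} lifts this to $HR$.

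The main point requiring care is the reduction from $g_i$ to $f$. Verifying the one-edge polygon conditions for $g_i$ itself would mean controlling the coefficients produced by the recursive lifting, which is hard. Passing to the localization sidesteps this: there $g_i$ and $f$ differ by a unit, so the hypotheses on $f$ suffice. We also need that each $g_i$ is a nonunit in $HR$, so that the factorization is genuine. This holds because $g_i(0)$ is a nonunit, using Lemma~\ref{lem:unit}.
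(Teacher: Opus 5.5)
Your proof is correct. Its outer structure matches the paper's: repeated coprime lifting, passage to $R_{(p_i)}$ where each $g_j$ with $j\ne i$ is a unit by Lemma~\ref{lem:unit}, and then Lemma~\ref{lem:localization}. The central step, however, is handled differently.

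The paper applies Theorem~\ref{thm:primitive-edge} to $g_i$ itself. It uses Proposition~\ref{prop:polygon-product} on $f=g_i\prod_{j\ne i}g_j$, notes that the unit cofactor contributes no negative edge, and concludes that the primitive edge from $(0,k_i)$ to $(n_i,0)$ is the negative edge of $\NP(g_i)$. Implicitly, one must then recover $v_i(g_i(n_i))=0$ and the strict inequalities for $g_i$ from that polygon, using $\gcd(k_i,n_i)=1$ to exclude lattice points on the edge.

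You instead observe that $g_i$ and $f$ are associates in $H(R_{(p_i)})$. You then apply Theorem~\ref{thm:primitive-edge} to $f$, whose hypotheses over $R_{(p_i)}$ are literally (i)--(iii) once $u\prod_{j\ne i}p_j^{k_j}$ is seen to be a unit there. This bypasses a second use of the polygon product rule and all bookkeeping on the coefficients of $g_i$, so it is the more economical argument. The paper's route does yield extra information, not needed here: $\NP(g_i)$ has the same primitive negative edge as $\NP(f)$.

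One expository slip: the sentence ``We cannot apply Theorem~\ref{thm:primitive-edge} to $f$ directly, because \ldots'' contradicts what follows it. You clearly mean that the theorem \emph{can} be applied to $f$ over $R_{(p_i)}$, with $u'$ (not $f(0)$) being the unit.
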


\begin{proof}
Repeated application of Theorem~\ref{thm:coprime-lifting} yields
\[
 f=g_1\cdots g_r,
 \qquad g_i(0)\sim p_i^{k_i}.
\]
Fix $i$ and work in $R_{(p_i)}$.  For $j\ne i$, the constant coefficient $g_j(0)$ is a unit, and hence $g_j$ is a unit in $H(R_{(p_i)})$ by Lemma~\ref{lem:unit}.  Such factors have no negative Newton edge.  Proposition~\ref{prop:polygon-product} therefore shows that the unique primitive negative edge imposed on $f$ by the hypotheses is also the negative edge of $g_i$.  Theorem~\ref{thm:primitive-edge} makes $g_i$ irreducible in $H(R_{(p_i)})$, and Lemma~\ref{lem:localization} then gives irreducibility in $HR$.
\end{proof}

\begin{corollary}[Several initial-divisibility conditions]\label{cor:several-initial}
In the setting of Theorem~\ref{thm:several-primes}, suppose that for each $i$ there is $n_i\ge1$ such that
\[
 v_i(n_i!)=0,
 \qquad \gcd(k_i,n_i)=1,
\]
\[
 p_i^{k_i}\mid f(t)\quad(1\le t<n_i),
 \qquad
 p_i\nmid f(n_i).
\]
Then $f$ is a product of $r$ irreducible Hurwitz series whose constant coefficients are associated, respectively, to $p_i^{k_i}$.
\end{corollary}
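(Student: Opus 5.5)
The plan is to deduce the corollary from Theorem~\ref{thm:several-primes}, in the same way that Corollary~\ref{cor:initial-divisibility} was deduced from Theorem~\ref{thm:primitive-edge}. We keep the setting of Theorem~\ref{thm:several-primes}: $R$ is a principal ideal domain, $f(0)=u\prod_i p_i^{k_i}$ with $r\ge2$, and $v_i$ is the normalized valuation on $R_{(p_i)}$. For each index $i$ we take the given $n_i$ and check hypotheses (i)--(iii) of that theorem. Once they hold, the theorem gives $f=g_1\cdots g_r$ with each $g_i$ irreducible in $HR$ and $g_i(0)\sim p_i^{k_i}$, which is exactly the claim.

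\emph{Hypothesis (i).} We have $v_i(n_i!)=0$, and $t!$ divides $n_i!$ for $0\le t\le n_i$, so $v_i(t!)=0$ for all such $t$. Hence $n_i$ is a binomial-unit range for $v_i$ in the sense of Definition~\ref{def:binomial-range}.

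\emph{Hypothesis (ii).} The condition $p_i\nmid f(n_i)$ in $R$ means $v_i(f(n_i))=0$. The coprimality $\gcd(k_i,n_i)=1$ is assumed directly.

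\emph{Hypothesis (iii).} For $1\le t<n_i$, the divisibility $p_i^{k_i}\mid f(t)$ gives $v_i(f(t))\ge k_i$; if $f(t)=0$ the valuation is $\infty$ and the inequality is trivial. Since $t\ge1$, we have $k_i(n_i-t)/n_i<k_i$, so $v_i(f(t))>k_i(n_i-t)/n_i$ holds strictly.

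There is no substantive obstacle, since every step is a direct translation of the hypotheses. The only points needing care are the two small checks above: passing from $v_i(n_i!)=0$ to the vanishing of $v_i(t!)$ for all $t\le n_i$, and noting that the strict inequality in (iii) uses $t\ge1$. Both are immediate. The count of exactly $r$ irreducible factors comes directly from the conclusion of Theorem~\ref{thm:several-primes}.
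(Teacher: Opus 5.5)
Your proposal is correct and takes the same route as the paper, which also just applies Theorem~\ref{thm:several-primes} after observing that $p_i^{k_i}\mid f(t)$ forces the strict inequalities in (iii). You additionally spell out the routine checks of hypotheses (i) and (ii), which the paper leaves implicit.
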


\begin{proof}
The divisibility assumptions imply the strict inequalities in Theorem~\ref{thm:several-primes} for every $i$.
\end{proof}

\begin{example}%[Two prime elements in the Gaussian integers]\label{ex:gaussian}
	Let $R=\Z[i]$.  The elements $19$ and $2+i$ are nonassociate Gaussian primes, and the residue characteristics of their localizations are $19$ and $5$, respectively.  Let $u\in\U(R)$, let $f\in HR$, and assume $2\le k\le5$.  Define
	\[
	g=h'_{19^k(2+i)u}+h'_4h_k+h_{k+1}f.
	\]
	The coefficients below index $k-1$ vanish, and $4$ is divisible by neither $19$ nor $2+i$.  For the prime $19$, use $n_1=k-1$ and exponent $k$; for the prime $2+i$, use $n_2=k-1$ and exponent $1$.  Since $k-1\le4$, both indices lie in the required binomial-unit ranges, and the coprimality conditions hold.  Corollary~\ref{cor:several-initial} therefore gives a factorization of $g$ into two irreducible Hurwitz series with constant coefficients associated to $19^k$ and $2+i$.
\end{example}

\begin{corollary}[Several block-valuation conditions]\label{cor:several-blocks}
Retain the notation of Theorem~\ref{thm:several-primes}.  Suppose that for each $j$ there is $m_j\ge1$ such that $k_jm_j+1$ lies in a binomial-unit range for $v_j$ and
\[
 v_j\bigl(f((k_j-\ell)m_j+i)\bigr)=\ell
\]
for $1\le\ell\le k_j$ and $1\le i\le m_j$, while
\[
 v_j(f(k_jm_j+1))=0.
\]
Then $f$ factors into $r$ irreducible Hurwitz series, with the $j$-th factor having constant coefficient associate to $p_j^{k_j}$.
\end{corollary}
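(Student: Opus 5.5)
The plan is to reduce Corollary~\ref{cor:several-blocks} to Theorem~\ref{thm:several-primes}, exactly as Theorem~\ref{thm:block-criterion} was reduced to Theorem~\ref{thm:primitive-edge}, now one prime at a time. For each index $j$ we take
\[
 n_j=k_jm_j+1
\]
and verify hypotheses (i)--(iii) of Theorem~\ref{thm:several-primes} for the prime $p_j$ with this choice. Once all three hold for every $j$, Theorem~\ref{thm:several-primes} yields $f=g_1\cdots g_r$ with each $g_j$ irreducible and $g_j(0)\sim p_j^{k_j}$, which is the assertion.

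Hypothesis (i) is assumed outright, since $k_jm_j+1$ lies in a binomial-unit range for $v_j$. For (ii), the assumption $v_j(f(k_jm_j+1))=0$ gives the unit condition. Coprimality holds because $\gcd(k_j,k_jm_j+1)=1$: any common divisor of $k_j$ and $k_jm_j+1$ divides $1$.

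Hypothesis (iii) is the only computation. Every $t$ with $1\le t<n_j=k_jm_j+1$ can be written uniquely as
\[
 t=(k_j-\ell)m_j+i,\qquad 1\le\ell\le k_j,\quad 1\le i\le m_j.
\]
Indeed, as $\ell$ runs from $k_j$ down to $1$, the blocks $\{(k_j-\ell)m_j+1,\ldots,(k_j-\ell+1)m_j\}$ tile $\{1,\ldots,k_jm_j\}$. For such $t$ we have $v_j(f(t))=\ell$ by hypothesis, so we must show
\[
 \ell>\frac{k_j(\ell m_j+1-i)}{k_jm_j+1}.
\]
Clearing denominators, the difference is
\[
 \ell(k_jm_j+1)-k_j(\ell m_j+1-i)=\ell+k_j(i-1)>0,
\]
the same identity used in the proof of Theorem~\ref{thm:block-criterion}.

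There is no real obstacle. The only points needing care are that the blocks exhaust exactly the indices $1,\ldots,n_j-1$, so that every $t$ in (iii) is covered, and that the hypotheses are stated for the fixed valuation $v_j$ on $R_{(p_j)}$, matching Theorem~\ref{thm:several-primes}. With these checks the corollary follows immediately.
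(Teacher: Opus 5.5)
Your proposal is correct and takes the same route as the paper: set $n_j=k_jm_j+1$ and apply Theorem~\ref{thm:several-primes}, using the block valuations to place every intermediate point strictly above the segment from $(0,k_j)$ to $(k_jm_j+1,0)$. You make explicit two things the paper's one-line proof leaves to the reader: that the blocks tile $\{1,\ldots,k_jm_j\}$, and that $\ell(k_jm_j+1)-k_j(\ell m_j+1-i)=\ell+k_j(i-1)>0$, which is the same computation as in Theorem~\ref{thm:block-criterion}.
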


\begin{proof}
For each $j$, the indicated valuations determine a primitive edge from $(0,k_j)$ to $(k_jm_j+1,0)$.  Apply Theorem~\ref{thm:several-primes}.
\end{proof}

%\section{Examples}\label{sec:examples}

%\section{Conclusion}

%The Hurwitz convolution permits strong factorization information to be extracted from a small number of coefficients.  Coprime factors of the constant coefficient lift recursively to factors of the entire series, while prime-power constant coefficients impose restrictive divisibility conditions on every possible factorization.  Counting the zero indices in the multinomial coefficient formula gives general factorization-length bounds, and valuations refine these bounds over discrete valuation domains.

%The Hurwitz--Newton polygon provides a second mechanism.  On a binomial-unit range, factorial normalization preserves coefficient valuations and converts the Hurwitz convolution into a polynomial product through the relevant degree.  Primitive negative edges then force irreducibility, and localization allows independent edges at distinct prime elements to be combined into an explicit irreducible decomposition.  Outside a binomial-unit range, the valuations of the binomial coefficients must be incorporated into any further polygonal theory.

\end{document}